\documentclass[11pt]{article}
\usepackage[a4paper,margin=27mm]{geometry}
\usepackage[T1]{fontenc}
\usepackage{authblk}
\usepackage{lmodern,microtype,amsmath,amssymb,amsthm,enumitem,needspace,hyperref}
\hypersetup{hidelinks,pdftitle={Optimal Kt-free list colorings and the Pollyanna property of graphs with no triangular wheel},pdfauthor={Hongyang Wang and Rong Wu}}
\newtheorem{theorem}{Theorem}[section]
\newtheorem{lemma}[theorem]{Lemma}
\newtheorem{corollary}[theorem]{Corollary}
\newtheorem{proposition}[theorem]{Proposition}
\theoremstyle{definition}

\newcommand{\om}{\omega}
\newcommand{\chr}{\chi}
\newcommand{\ceil}[1]{\left\lceil #1\right\rceil}
\newcommand{\calT}{\mathcal{T}}
\title{Optimal $K_t$-free list colorings\\and the Pollyanna property of graphs\\with no triangular wheel}
\author[1]{Hongyang Wang\thanks{Email address: \href{mailto:hwanghj@connect.ust.hk}{hwanghj@connect.ust.hk}}}
\author[2]{Rong Wu\thanks{Email address: \href{mailto:wurong@shmtu.edu.cn}{wurong@shmtu.edu.cn} (corresponding author).}}

\affil[1]{Department of Mathematics, Hong Kong University of Science and Technology, \quad Clear Water Bay, Hong Kong}
\affil[2]{School of Science, Shanghai Maritime University, Shanghai 201306, China}

\date{October 16, 2026}

\begin{document}
\maketitle

\begin{abstract}
A wheel consists of an induced cycle of length at least four and a vertex with at least three neighbors on the cycle. Let $\calT$ be the class of graphs with no wheel containing a triangle. We prove that, for every $G\in\calT$ and every integer $t\ge3$, the $K_t$-free chromatic and choice numbers both equal $\ceil{\om(G)/(t-1)}$. We further prove that $\calT$ is 
$k$-strongly linear-Pollyanna for every integer $k\ge2$, and hence is linear-Pollyanna. In particular, the class of wheel-free graphs is Pollyanna, answering Problem 10.2 of Chudnovsky, Cook, Davies and Oum~\cite{CCDO}.
\end{abstract}

\section{Introduction}\label{sec:introduction}
All graphs in this paper are finite and simple. We use $V(G)$ and $E(G)$ for the vertex and edge sets of a graph $G$, and $G[X]$ for the subgraph induced by $X\subseteq V(G)$. Put $G-X=G[V(G)\setminus X]$, and write $G-v$ when $X=\{v\}$. The neighborhood of $v$ is $N_G(v)$, and $N_X(v)=N_G(v)\cap X$. We omit the subscript $G$ when the graph is clear and write $N_H(v)=N_{V(H)}(v)$ for a subgraph $H$. A vertex is \emph{complete to} a set if it is adjacent to every vertex of that set.

We write $\chr(G)$ and $\om(G)$ for the chromatic number and clique number of $G$, respectively. A maximal clique is always inclusion-maximal. Colorings are not required to be proper unless stated otherwise. A graph is \emph{$H$-free} if it has no induced subgraph isomorphic to $H$, and a class is \emph{hereditary} if it is closed under taking induced subgraphs. We denote by $P_r$, $C_r$ and $K_r$ a path, a cycle and a complete graph on $r$ vertices, respectively. The length of a path or cycle is its number of edges; the first and last vertices of a path are its \emph{ends}.

A \emph{hole} is an induced cycle of length at least four.  A \emph{wheel} is a pair $(H,z)$ where $H$ is a hole and $z\notin V(H)$ has at least three neighbors in $H$.  The hole $H$ is the \emph{rim}, and $z$ is the \emph{center}.  The wheel is \emph{triangle-free} if $G[V(H)\cup\{z\}]$ is triangle-free, or equivalently if $N_H(z)$ is an independent set.  We call it \emph{triangular} otherwise.  
%Thus a wheel is triangular precisely when its center is adjacent to both ends of a rim edge.

Let $\calT$ be the class of graphs with no triangular wheel.  It is hereditary and contains every wheel-free graph and every triangle-free graph.
Davies~\cite{Davies23} showed that (triangle, wheel)-free graphs can have arbitrarily large chromatic number.  This motivates colorings in which each color class has bounded clique number.

A \emph{list assignment} $L$ assigns each vertex $v$ a finite set $L(v)$ of permitted colors. An \emph{$L$-coloring}, or \emph{list coloring}, is a vertex coloring $\varphi$ such that $\varphi(v)\in L(v)$ for every vertex $v$. For $t\ge2$, a $K_t$-free coloring is a vertex coloring with no monochromatic $K_t$. The \emph{$K_t$-free chromatic number} $\chr_t(G)$ is the minimum number of colors in such a coloring. The \emph{$K_t$-free choice number} $\chr_t^\ell(G)$ is the least nonnegative integer $q$ such that every assignment of lists of size at least $q$ admits a $K_t$-free list coloring. Both parameters are zero when $V(G)=\varnothing$.

For the hereditary property $\mathcal P_t=\{H:\om(H)\le t-1\}$, the generalized chromatic and choice numbers studied by Borowiecki, Drgas-Burchardt and Mih\'ok~\cite{BDM95} are $\chr_t$ and $\chr_t^\ell$. They proved that these two numbers can differ arbitrarily on general graphs for each fixed $t\ge3$. For planar graphs, Thomassen~\cite{Thomassen08} proved that every assignment of lists of size two admits a coloring with no monochromatic triangle; in our notation, $\chr_3^\ell(G)\le2$.

Sachs and Sch\"auble~\cite{SS68} constructed, for every fixed $t\ge2$, $K_{t+1}$-free graphs with arbitrarily large $K_t$-free chromatic number. 
If such a graph $G$ satisfies $\chi_t(G)>1$, then it contains a $K_t$ and hence $\omega(G)=t$.
Our first main result gives an exact formula on $\calT$.

\begin{theorem}\label{cor:exact}
For every $G\in\calT$ and every integer $t\ge3$,
\[
   \chr_t^\ell(G)=\chr_t(G)
   =\ceil{\frac{\om(G)}{t-1}}.
\]
\end{theorem}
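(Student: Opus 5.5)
The lower bound is immediate and holds for every graph. In any coloring with fewer than $\ceil{\om(G)/(t-1)}$ colors, some color class meets a maximum clique in at least $t$ vertices, giving a monochromatic $K_t$. Hence $\chr_t^\ell(G)\ge\chr_t(G)\ge\ceil{\om(G)/(t-1)}$. The whole work is therefore the upper bound $\chr_t^\ell(G)\le q:=\ceil{\om(G)/(t-1)}$ for $G\in\calT$. I would prove a stronger, inductive statement: if every vertex has a list of size at least $q$, then $G$ has an $L$-coloring in which every monochromatic clique has size at most $t-1$. Since $t\ge3$, each color class is then allowed to contain edges, indeed triangles when $t\ge4$.

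The structural input I would use is a decomposition theorem for $\calT$. A graph with no triangular wheel should either be simple — triangle-free, or a complete graph, or a graph whose triangles are organized very rigidly — or admit a clique cutset. The reason is that a triangle in a graph with no clique cutset, together with a hole through nearby vertices, tends to produce a center with two adjacent neighbors on a rim. Concretely, I would aim to prove that every $G\in\calT$ has a vertex $v$ whose neighborhood is a disjoint union of cliques, or more generally a ``simplicial-like'' vertex $v$ such that every clique containing $v$ lies in a single maximal clique $N[v]\cap K$ for a bounded number of $K$. The argument I would try: take a minimal clique cutset decomposition and choose an extremal atom. In an atom with no clique cutset, if $N(v)$ contained an induced $P_3$ $a$–$b$–$c$ whose ends are not separated, a shortest path from $a$ to $c$ avoiding $N[v]\setminus\{a,c\}$, together with $a,b,c$, forms a hole. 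Then $v$ has the three neighbors $a,b,c$ on it with $ab$ an edge, a triangular wheel. This shows that in $\calT$ the neighborhood of every vertex, within an atom, is $P_3$-free, that is, a disjoint union of cliques.

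Given that local structure, the list-coloring step proceeds by induction on $|V(G)|$. Delete a suitable vertex $v$ and color $G-v$ by induction, since $\om(G-v)\le\om(G)$ and $\calT$ is hereditary. Then extend to $v$. The vertex $v$ lies in cliques $v\cup C_i$, where the $C_i$ are the disjoint clique components of $N(v)$. The color $c$ is forbidden at $v$ only if some $C_i$ contains $t-1$ vertices colored $c$. Each $|C_i|\le\om-1$, so $C_i$ blocks at most $\lfloor(\om-1)/(t-1)\rfloor\le q-1$ colors. Several components could block different colors, however, so the extension needs more care. Instead I would recolor: choose the coloring of $G-v$ so that its restriction to each $C_i$ is ``balanced'', or glue along clique cutsets, where precoloring a clique with balanced classes is flexible because lists on a clique can be permuted.

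The main obstacle is exactly this gluing and extension step with lists, where colors cannot be freely permuted. I would first try strengthening the induction hypothesis to: for any clique $K$ and any prescribed $K_t$-free coloring of $K$, with each color used at most $t-1$ times and respecting lists, the coloring extends. This precoloring-extension form glues across clique cutsets trivially. For an atom, it is proved by the disjoint-union-of-cliques neighborhood structure, possibly via a perfect-elimination-like ordering.
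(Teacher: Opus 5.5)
\textbf{There is a genuine gap, and it is at the step you flag as the main obstacle.} Your lower bound is fine. Your strengthened hypothesis, extending a legal list precoloring of a clique, is the right form: it is essentially Theorem~\ref{thm:local-list}, and gluing across clique cutsets works as you say.

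\textbf{Why a vertex with neighborhood a disjoint union of cliques is not enough.} Deleting such a vertex $v$ does not let you extend back to $v$, and ``balancing'' is unavailable with lists. The bowtie (triangles $vab$ and $vcd$ sharing $v$) lies in $\calT$ and has $\om=3$, so $t=3$ gives $q=2$. Take $L(a)=L(b)=\{\mathrm{red},x\}$, $L(c)=L(d)=\{\mathrm{blue},y\}$ and $L(v)=\{\mathrm{red},\mathrm{blue}\}$. Induction on $G-v$ may return $a,b$ red and $c,d$ blue, and then $v$ cannot be colored. Since $\{a,b,c,d\}$ is not a clique, your precoloring hypothesis gives you no control over this. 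Different components of $N(v)$ block different colors, so your per-component bound $\lfloor(\om-1)/(t-1)\rfloor$ does not add up to anything useful. A vertex lying in a ``bounded number'' of maximal cliques is not enough either; the number must be one. In the atom case you appeal to a ``perfect-elimination-like ordering'', but that ordering is exactly what has not been constructed.

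\textbf{The missing idea is triangle-simplicity.} You need, outside any prescribed clique $A\subsetneq V(G)$, a vertex lying in at most one inclusion-maximal clique of order at least three. Equivalently, at most one component of its neighborhood contains an edge. The paper proves this in Theorem~\ref{thm:two}. Clique cutsets are handled by induction. In an atom, Lemma~\ref{lem:diamond} gives diamond-freeness, and Lemma~\ref{lem:endpoint} shows that both ends of a maximal induced path through an induced $P_3$ are triangle-simple. The reason is that a second nontrivial component of $N(p_1)$ would produce a triangular wheel whose rim runs through $p_1$, a neighbor in that component, and part of the path.

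\textbf{How the counting then closes.} Delete such a vertex $v$ and color $G-v$ by induction. Each blocked color $c$ yields a monochromatic clique $S_c\subseteq N(v)$ with $|S_c|=t-1\ge2$. Because $t\ge3$, every $S_c\cup\{v\}$ lies in the unique maximal clique $M$ of order at least three through $v$, and the $S_c$ are pairwise disjoint. So if every color were blocked, $q(t-1)\le|L(v)|(t-1)\le|M|-1\le\om(G)-1$, contradicting $q(t-1)\ge\om(G)$. No gluing or recoloring is needed.

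\textbf{A secondary problem in your structural sketch.} Your argument that atoms have $P_3$-free neighborhoods is incomplete.
\begin{itemize}
\item $N[v]\setminus\{a,c\}$ is not a clique, so the absence of a clique cutset does not give an $a$--$c$ path avoiding it.
\item Even given such a path, the cycle through $b$ may have chords at $b$. Using $b$ as the center and $v,a,\dots,c$ as the rim fixes this part.
\end{itemize}
Lemma~\ref{lem:diamond} avoids both problems by deleting an inclusion-maximal clique $K$ that has two nonadjacent vertices complete to it.
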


A \emph{theta} consists of three internally vertex-disjoint paths of length at least two with common ends, such that the union of any two is a hole. Radovanovi\'c, Trotignon and Vu\v{s}kovi\'c~\cite{RTVII} obtained a decomposition theorem for $(\mathrm{theta},\mathrm{wheel})$-free graphs using clique cutsets and 2-joins. They subsequently proved that these graphs satisfy $\chr(G)\le\max\{3,\om(G)\}$~\cite{RTV}. 
For these graphs and every integer $t\geq 3$,
 the equality $\chi_t(G)=\lceil{\omega(G)/(t-1)}\rceil$ follows from this bound.
%On this smaller class, the ordinary equality in Theorem~\ref{cor:exact} follows from that bound.  
For $\om(G)\ge3$, group the proper color classes into sets of at most $t-1$ classes and use a maximum clique for the reverse inequality.  For $\om(G)\le2$, the graph itself is $K_t$-free.  This class is strictly contained in $\calT$: the graph $K_{2,3}$ is wheel-free and is itself a theta.  Theorem~\ref{thm:local-list} gives a more general extension theorem for list-precolored cliques, allowing a different clique bound for each color.

A class $\mathcal F$ is $\chr$-bounded if there is a function $f$ such that $\chr(H)\le f(\om(H))$ for every induced subgraph $H$ of a graph in $\mathcal F$.  We call $f$ a \emph{binding function}; the class is \emph{polynomially $\chr$-bounded} if $f$ can be chosen to be a polynomial.  Following Chudnovsky, Cook, Davies and Oum~\cite{CCDO}, a class $\mathcal C$ is \emph{Pollyanna} if $\mathcal C\cap\mathcal F$ is polynomially $\chr$-bounded for every $\chr$-bounded class $\mathcal F$.

Bria\'nski, Davies and Walczak~\cite{BDW24} showed that a hereditary $\chr$-bounded class need not be polynomially $\chr$-bounded.

For an integer $k\ge1$, a class $\mathcal F$ is \emph{$k$-good} if it is hereditary and there is a constant $m$ such that every graph in $\mathcal F$ with clique number at most $k$ has chromatic number at most $m$. Following Chudnovsky, Cook, Davies and Oum~\cite{CCDO}, a class $\mathcal C$ is \emph{$k$-strongly Pollyanna} if $\mathcal C\cap\mathcal F$ is polynomially $\chr$-bounded for every $k$-good class $\mathcal F$. The notions of \emph{linear-Pollyanna} and \emph{$k$-strongly linear-Pollyanna} are defined by requiring a linear binding function in place of a polynomial one.

Chudnovsky, Cook, Davies and Oum~\cite{CCDO} proved that bull-free graphs are 2-strongly Pollyanna and bowtie-free graphs are 3-strongly Pollyanna. Here the bull consists of a triangle with two pendant edges attached at distinct vertices, and the bowtie consists of two triangles sharing exactly one vertex. Chen and Xu~\cite{CX26} proved that $(C_4,\mathrm{bull})$-free graphs and $(C_4,\mathrm{hammer})$-free graphs are 2-strongly linear-Pollyanna, where the hammer is obtained from a triangle by attaching a pendant path of length two.

Liu, Sun, Wang and Wu~\cite{LSWW26} proved that the class of $(C_4,B_{s,t})$-free graphs is $(s+t-1)$-strongly linear-Pollyanna for all integers $s,t\ge2$, where $B_{s,t}$ consists of two cliques of orders $s+1$ and $t+1$ sharing exactly one vertex, with no edges between their remaining vertices. The case $s=t=2$ answers a question of Chen and Xu concerning $(C_4,\mathrm{bowtie})$-free graphs.

Chudnovsky, Cook, Davies and Oum~\cite{CCDO} asked whether the class of wheel-free graphs is Pollyanna. We prove that it is, and obtain a stronger result for $\calT$.

\begin{theorem}\label{thm:strong}
The class $\mathcal T$ is $k$-strongly linear-Pollyanna for every integer $k\ge2$, and hence is linear-Pollyanna. In particular, the class of wheel-free graphs is $k$-strongly linear-Pollyanna for every integer $k\ge2$ and is linear-Pollyanna.
\end{theorem}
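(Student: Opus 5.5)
Given a $k$-good class $\mathcal F$ with constant $m$ (every $H\in\mathcal F$ with $\om(H)\le k$ has $\chr(H)\le m$), I will show that every $G\in\calT\cap\mathcal F$ satisfies $\chr(G)\le m\cdot\ceil{\om(G)/k}\le m\,\om(G)$. This is linear in $\om(G)$. The tool is Theorem~\ref{cor:exact} with $t=k+1\ge3$.

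First, apply Theorem~\ref{cor:exact} with $t=k+1$. It gives a partition of $V(G)$ into $q=\ceil{\om(G)/k}$ classes $V_1,\dots,V_q$, each inducing a $K_{k+1}$-free subgraph. Thus $\om(G[V_i])\le k$ for every $i$. Since $\mathcal F$ is hereditary, each $G[V_i]$ lies in $\mathcal F$, so $\chr(G[V_i])\le m$. Using disjoint palettes of $m$ colors on the parts yields a proper coloring of $G$ with at most $mq\le m\,\om(G)$ colors. The same bound applies to every induced subgraph of a graph in $\calT\cap\mathcal F$, because both classes are hereditary. Hence $\calT\cap\mathcal F$ is $\chr$-bounded with the linear binding function $f(w)=m\ceil{w/k}\le mw$ (with $f(0)=0$). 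This proves that $\calT$ is $k$-strongly linear-Pollyanna for each $k\ge2$.

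Next, derive linear-Pollyanna from the case $k=2$. Every $\chr$-bounded class $\mathcal F$ can be replaced by its hereditary closure, which is still $\chr$-bounded with the same function $g$. That closure is $2$-good with $m=g(2)$. The previous paragraph then bounds $\chr$ linearly on $\calT\cap\mathcal F$.

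Finally, wheel-free graphs form a hereditary subclass of $\calT$. Each of these properties passes to subclasses, since intersecting with a smaller class only shrinks $\calT\cap\mathcal F$. So the wheel-free class is also $k$-strongly linear-Pollyanna for every $k\ge2$ and is linear-Pollyanna.

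There is no real obstacle here: all the difficulty sits in Theorem~\ref{cor:exact}, specifically the existence of a $K_{k+1}$-free coloring with $\ceil{\om/k}$ colors. The only care needed is with definitions: hereditariness, applying $t=k+1\ge3$, and handling the empty graph.
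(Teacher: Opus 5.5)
Your proposal is correct and follows essentially the paper's proof. The $k$-good case is exactly Corollary~\ref{cor:interpolate} (Theorem~\ref{cor:exact} with $t=k+1$, plus disjoint palettes), and your reduction of linear-Pollyanna to the case $k=2$ via the hereditary closure repackages the paper's direct partition into $\ceil{\om(H)/2}$ triangle-free parts, each colored with $f(2)$ colors; one small point is that taking $m=g(2)$ presumes $g$ is nondecreasing, as the paper also assumes, and otherwise you can take $m=\max\{g(1),g(2)\}$.
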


A vertex is \emph{triangle-simple} if it belongs to at most one inclusion-maximal clique of order at least three. The key structural result, Theorem~\ref{thm:two}, states that every noncomplete graph in $\calT$ has two nonadjacent triangle-simple vertices.

Dirac~\cite{Dirac61} proved that every noncomplete chordal (that is, hole-free) graph has two nonadjacent simplicial vertices, whose neighborhoods are cliques.
Aboulker, Charbit, Trotignon and Vu\v{s}kovi\'c~\cite{ACTV} call a wheel a \emph{2-wheel} if some three consecutive rim vertices $x,y,z$ have $x,y$ adjacent to the center and $z$ nonadjacent to it.  They call it a \emph{3-wheel} if the center has three consecutive neighbors on the rim.  A wheel is \emph{universal} if its center is complete to the rim.  In this terminology, a triangular wheel is either a 2-wheel or a universal wheel, and $\calT$ is the class of $(2\text{-wheel},3\text{-wheel})$-free graphs.  A theorem of Chudnovsky, stated in ~\cite{ACTV}, asserts  that every 3-wheel-free graph with at least one vertex has a vertex whose neighborhood is a disjoint union of cliques.  
Under the additional exclusion of $2$-wheels,
Theorem ~\ref{thm:two} gives two nonadjacent such vertices in every noncomplete graph, each with at most one neighborhood component of order at least two.
%There may be several components of order at least two.  For noncomplete graphs, Theorem~\ref{thm:two} uses the additional exclusion of 2-wheels to give two nonadjacent vertices, each having at most one such component.

Corollary \ref {cor:order} gives a deletion order by triangle-simple vertices outside a prescribed clique. We extend
the precoloring in reverse order. If no listed color can be assigned to the next vertex, the corresponding
monochromatic cliques are pairwise disjoint and all lie in one maximal clique through that vertex, contradicting (\ref{eq:local-capacity}).

%The proof first finds a triangle-simple vertex outside any prescribed clique.  Deleting such vertices successively gives an order in which to extend the clique precoloring.  When the vertices are colored in reverse order, any color that cannot be assigned to the next vertex has a monochromatic clique in its neighborhood witnessing this obstruction.  Triangle-simplicity puts all these cliques in the same maximal clique.  Since cliques of different colors are disjoint, the local condition on the sums of the prescribed clique bounds forces at least one color to be available.

Section~\ref{sec:structure} proves Theorem~\ref{thm:two} and obtains the deletion order. Section~\ref{sec:lists} proves the clique-precoloring extension theorem and its refinement, and deduces Theorem~\ref{cor:exact}. Section~\ref{sec:pollyanna} proves Theorem~\ref{thm:strong}, with explicit linear bounds on the relevant intersections. Section~\ref{sec:algorithms} gives the algorithms and examples showing the need for the hypotheses.

\section{Triangle-simple vertices}\label{sec:structure}

\begin{theorem}\label{thm:two}
Every noncomplete graph in $\calT$ has two nonadjacent triangle-simple vertices.
\end{theorem}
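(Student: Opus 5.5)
We argue by induction on $|V(G)|$, mimicking Dirac's proof for chordal graphs, with clique cutsets replacing minimal separators. The natural strengthening is: \emph{for every noncomplete $G\in\calT$ and every minimal clique cutset, or more generally for every maximal clique $K$ with $V(G)\ne K$, there is a triangle-simple vertex outside $K$}. Two nonadjacent triangle-simple vertices are then obtained in the usual way.

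\textbf{Step 1: local obstruction.} A vertex $v$ fails to be triangle-simple exactly when $v$ lies in two distinct maximal cliques $Q_1,Q_2$ of order at least three. Equivalently, $N(v)$ contains two distinct maximal cliques each of order at least two. We first show that in $G\in\calT$, if $v$ is not triangle-simple, then the structure near $v$ is rigid. Suppose $N(v)$ contained an induced $P_3$ $a\!-\!b\!-\!c$. Then $v,a,b,c$ induce a diamond. Any path from $a$ to $c$ avoiding $N[v]\setminus\{a,c\}$ closes a hole through $a,b,c$, and since $v$ is adjacent to $a,b,c$ this gives a triangular wheel centered at $v$, or at $b$. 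Using this, we argue that the components of $G-N[v]$ attach to $N(v)$ only inside cliques, and that the two cliques $Q_1\setminus\{v\}$ and $Q_2\setminus\{v\}$ are separated by a clique cutset containing $v$. This step uses Chudnovsky's theorem (3-wheel-freeness gives a vertex whose neighborhood is a disjoint union of cliques) as a guide. What we actually need is: \emph{if $G\in\calT$ has no clique cutset, then every vertex is triangle-simple or $G$ is complete}. We prove this by taking a non-triangle-simple vertex $v$ with cliques $Q_1,Q_2$ and choosing $x\in Q_1\setminus Q_2$ and $y\in Q_2\setminus Q_1$. If $x$ and $y$ are adjacent, extend to get a contradiction with maximality via the diamond argument. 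Otherwise, a shortest path from $x$ to $y$ in $G-(N[v]\setminus\{x,y\})$ exists by the absence of a clique cutset, namely $(Q_1\cap Q_2)$ together with part of $N(v)$. Together with $v$ and a common neighbor in $Q_1\cap Q_2\setminus\{v\}$, this yields a hole whose center $v$ sees two adjacent rim vertices, which is a triangular wheel.

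\textbf{Step 2: decomposition.} If $G$ is noncomplete and has no clique cutset, Step 1 shows that every vertex is triangle-simple, and any two nonadjacent vertices work. Otherwise take a clique cutset $S$ and components, and set $G_i=G[C_i\cup S]$. By induction, applied to the strengthened statement, each $G_i$ has a triangle-simple vertex outside $S$. This holds trivially if $G_i$ is complete, since then a vertex of $C_i$ lies in a unique maximal clique of $G_i$. For a vertex $u\in C_i$, maximal cliques of $G$ containing $u$ lie in $G_i$, because $S$ separates. Hence triangle-simplicity in $G_i$ transfers to $G$. Vertices from two different components are nonadjacent, which gives the theorem.

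\textbf{Main obstacle.} The main obstacle is Step 1: proving that a non-triangle-simple vertex forces a clique cutset. We must carefully choose the separating set, namely $v$ together with $Q_1\cap Q_2$ and the neighbors of $v$ complete to both sides, and verify that it is a clique. Then we must check that every connecting path really produces a hole of length at least four, and not just a triangle. If it fails, the first fallback is to use the neighbor structure given by 2-wheel-freeness: a vertex $v$ seeing two consecutive rim vertices must see the whole rim, so it is universal. We would then rule out universal wheels by exhibiting a 2-wheel centered at a rim vertex.
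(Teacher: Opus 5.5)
\textbf{There is a genuine gap: your Step~1 claim is false.} You claim that a noncomplete graph in $\calT$ with no clique cutset has only triangle-simple vertices. That fails, so the base of your induction (``any two nonadjacent vertices work'') collapses.

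Here is a counterexample. Let $H$ consist of an edge $xy$ together with a vertex $z$, joined to $x$ by the paths $xa_1z$, $xa_2z$ and to $y$ by the paths $yb_1z$, $yb_2z$. Let $G$ be the line graph of $H$.
\begin{itemize}[leftmargin=2em]
\item $H$ is triangle-free and none of its cycles has a chord. So an edge of $H$ off a cycle shares at most one end with it, and hence has at most two neighbors on the corresponding hole of $G$. Thus $G$ is wheel-free, so $G\in\calT$.
\item Since $H$ is triangle-free, every clique of $G$ is a set of edges of $H$ with a common end. Deleting all or part of any star of $H$ leaves the remaining edges in one component. So $G$ is connected, noncomplete, and has no clique cutset.
\item Yet the vertex $xy$ of $G$ lies in the two maximal cliques $\{xy,xa_1,xa_2\}$ and $\{xy,yb_1,yb_2\}$, so it is not triangle-simple.
\end{itemize}

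\textbf{Where your argument breaks.} It fails exactly where you placed the obstacle.
\begin{itemize}[leftmargin=2em]
\item With no clique cutset, $G$ is diamond-free by Lemma~\ref{lem:diamond}. Hence $Q_1\cap Q_2=\{v\}$, and there is no common neighbor in $(Q_1\cap Q_2)\setminus\{v\}$.
\item Your $x$--$y$ path closes a hole with $v$ on the rim, not as its center. In the example this is the $5$-hole $xy,xa_1,a_1z,zb_1,yb_1$, on which $xa_2$ and $yb_2$ each have only two neighbors. Nothing forces a vertex with three rim neighbors.
\item Also, $N[v]\setminus\{x,y\}$ is not a clique, so the absence of clique cutsets does not by itself supply your path.
\end{itemize}

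\textbf{The missing idea.} In the no-clique-cutset case you must choose the two vertices carefully rather than take arbitrary ones. The paper uses diamond-freeness and takes the two ends of an induced path that extends an induced $P_3$ and cannot be extended further. Suppose such an end $p_1$ were not triangle-simple. Pick adjacent $x,y\in N(p_1)$ in a clique of $G[N(p_1)]$ avoiding $p_2$. Each of $x,y$ must have a neighbor further along the path, since otherwise the path would extend. Whichever of $x,y$ first meets the path later closes a hole through $p_1$, and on that hole the other vertex is the center of a triangular wheel. The two ends are nonadjacent because the path has at least three vertices.

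Your Step~2 (the clique-cutset decomposition and the transfer of triangle-simplicity from $G_i$ to $G$) agrees with the paper and is fine.
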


A \emph{clique cutset} of a connected graph is a clique whose deletion leaves at least two nonempty components.  By a \emph{diamond} we always mean an induced subgraph isomorphic to $K_4$ with one edge deleted.

Diot, Radovanovi\'c, Trotignon and Vu\v{s}kovi\'c~\cite{DRTV} proved that a wheel-free graph containing a diamond has a clique cutset.  Their argument gives the following slightly stronger statement, since each wheel constructed in the proof contains a triangle.  We give the proof here.

\begin{lemma}\label{lem:diamond}
Let $G$ be connected and contain an induced diamond.  Then $G$ has a clique cutset or $G$ contains a triangular wheel.
\end{lemma}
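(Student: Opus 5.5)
We argue by contradiction. Suppose $G$ is connected, contains an induced diamond, has no clique cutset, and contains no triangular wheel. Let the diamond have vertices $a,b,x,y$, where $ab$ is the missing edge and $x,y$ are both adjacent to $a$ and $b$. Among all induced diamonds we choose one, together with a maximal clique $K$ containing the edge $xy$, so as to make $K$ as large as possible. Then $K\cup\{a\}$ and $K\cup\{b\}$ are not cliques, so we may assume $a$ and $b$ are each nonadjacent to some vertex of $K$. Alternatively, we may simply fix the clique $K=\{x,y\}\cup(\text{common neighbours forming a clique})$ and work with a minimal separating structure. The cleaner route is as follows. Since $G$ has no clique cutset, $K$ does not separate $a$ from $b$. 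Hence there is an induced path $P$ from $a$ to $b$ whose interior avoids $K$ and consists of vertices outside $N[K]\setminus\{a,b\}$ as far as possible. Concretely, we take $P=a\,p_1\cdots p_m\,b$ to be a shortest path from $a$ to $b$ in $G-K$.

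Since $a$ and $b$ are nonadjacent, $m\ge1$. The cycle $C=a\,P\,b\,x$ is a hole, or it can be shortcut using chords from $x$. The key steps analyse how $x$ and $y$ attach to the interior of $P$.

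First, consider the case where $x$ has no neighbour among $p_1,\dots,p_m$. Then $aPbx$ is a hole of length at least $4$. If $y$ is also anticomplete to the interior, then $y$ has exactly the neighbours $a,b,x$ on this hole. Among these, $x$ is adjacent to $a$ and $b$, so $y$ is the center of a triangular wheel (it sees two consecutive rim vertices, namely $a$ and $x$). This is a contradiction. If $y$ has neighbours in the interior, then $y$ has at least three neighbours on the hole $aPbx$, including the adjacent pair $a,x$. Again this is a triangular wheel, a contradiction.

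Second, consider the case where $x$ has neighbours on $P$. Let $p_i$ be the first neighbour of $x$ on $P$ after $a$. Then $a\,p_1\cdots p_i\,x$ is a hole when $i\ge2$; when $i=1$ it is a triangle, and we pass to the next neighbour. We then choose a subpath $Q$ of $aPb$ between two consecutive neighbours of $x$ with length at least $2$ such that $Q$ together with $x$ forms a hole. Such a $Q$ exists unless $x$ is adjacent to all of $p_1,\dots,p_m$. In that case $x$ is complete to $P$, and $y$ or the structure $aPb$ plus $y$ produces the wheel with rim $a\,P\,b\,y$ (if this is a hole) and center $x$. Here $x$ is complete to the rim, so this is a universal, hence triangular, wheel. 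Otherwise we use $y$ similarly and check whether $y$ has three neighbours on a hole built from $x$, segments of $P$, and $a$ or $b$, with two of them consecutive.

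The main obstacle is making this case analysis on the attachments of $x$ and $y$ to $P$ exhaustive while ensuring the rim chosen is genuinely induced. To handle it, we would first pick the path $P$ minimising the number of neighbours of $\{x,y\}$ in its interior. We would then treat the subcases in order: $x$ complete to the interior; $x$ anticomplete to it; and mixed, symmetrically in $x$ and $y$. In each subcase we exhibit a hole through $a$ or $b$ on which the other vertex of $\{x,y\}$ has an adjacent pair of neighbours plus a third, which gives a triangular wheel.
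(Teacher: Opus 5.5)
Your first case is correct: if $x$ has no neighbour among $p_1,\dots,p_m$, then $a\,p_1\cdots p_m\,b\,x$ is a hole on which $y$ sees $a$, $x$ and $b$, with $ax$ a rim edge. The gap is the second case. You never prove it, and the local analysis you describe cannot prove it.

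With the clique fixed as $\{x,y\}$ and the pair $a,b$ fixed, an interior vertex $c$ of $P$ may be complete to $\{x,y\}$. If $P=a\,c\,b$, then $G[\{a,b,c,x,y\}]$ is $K_5$ minus the edge $ab$, which is chordal. So no hole, let alone a wheel, can be built from $x$, $y$ and segments of $P$. Minimising the number of neighbours of $\{x,y\}$ in the interior of $P$ does not avoid this. Take $K_5-ab$ on $\{a,b,c,x,y\}$ and add two vertices $d,e$ with the edges $ad,de,eb,dx,ey$. This graph is connected, has no clique cutset, and contains the diamond $\{a,b,x,y\}$. Moreover, $P=acb$ is both the unique shortest $a$--$b$ path in $G-\{x,y\}$ and the unique minimiser of your count. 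Yet every hole of this graph passes through both $d$ and $e$ (e.g.\ the triangular wheel with rim $a\,d\,e\,b\,c$ and centre $x$), so nothing built from $V(P)\cup\{x,y\}$ gives the contradiction. Your step via a segment $Q$ between consecutive neighbours of $x$ also fails on its own terms: $y$ is adjacent to $x$ on the hole formed by $Q$ and $x$, but it need not have two further neighbours on $Q$.

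The missing idea is to vary the clique and the pair, as the paper does.
\begin{itemize}
\item Choose $K$ inclusion-maximal among cliques of order at least two that have two nonadjacent common neighbours outside $K$. Your opening sentence gestures at this, but a maximal clique containing $xy$ may contain $a$, and you then drop the idea.
\item With $K$ fixed, choose the nonadjacent $K$-complete pair $a,b$ at minimum distance in $G-K$.
\end{itemize}
Then the first interior vertex $c$ of a shortest $a$--$b$ path that has a neighbour in $K$ is not complete to $K$. Otherwise either $a,c$ or $c,b$ is a closer pair, or $P=acb$ and $K\cup\{c\}$ contradicts maximality. This last case is exactly what happens in the example above with $K=\{x,y\}$, which is why $K$ must be enlarged. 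Now take $x\in K$ adjacent to $c$, $y\in K$ nonadjacent to $c$, and $p_j$ the first neighbour of $y$ after $c$. The hole $y\,a\,p_1\cdots p_j$ with centre $x$, which is adjacent to $y$, $a$ and $c$, is the triangular wheel. The remaining case, where no interior vertex has a neighbour in $K$, is essentially your Case 1.
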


\begin{proof}
Among cliques $K$ of order at least two admitting two nonadjacent vertices outside $K$, each complete to $K$, choose $K$ inclusion-maximal subject to this property.  Such a clique exists because the graph contains a diamond.  Suppose that $G-K$ is connected.  With this $K$ fixed, among all nonadjacent $K$-complete pairs $a,b$, choose one for which a shortest $a$--$b$ path $P=p_0p_1\cdots p_m$
in $G-K$, with $p_0=a$ and $p_m=b$, has minimum length.  Then $m\ge2$ and $P$ is induced.

First suppose that no internal vertex of $P$ has a neighbor in $K$.  Choose distinct $x,y\in K$.  The cycle
$x,p_0,p_1,\ldots,p_m,x
$
is a hole of length $m+2\ge4$, and $y$ has the three distinct rim neighbors $x,a,b$.  Moreover, $y,x,a$ induce a triangle because $K$ is a clique and $a$ is complete to $K$.  Hence this is a triangular wheel.

Now let $c=p_i$ be the first internal vertex of $P$ having a neighbor $x\in K$.  We claim that $c$ is not complete to $K$.  If it were and $i\ge2$, then the pair $a,c$ and the corresponding subpath would contradict the choice of $P$ because $a,c$ are nonadjacent and the subpath is shorter.  If $i=1$ and $m\ge3$, use the pair $c,b$ in the same way.  In the remaining case $m=2$, the larger clique $K\cup\{c\}$ still has the two nonadjacent vertices $a,b$ complete to it, contradicting the maximality of $K$.

Choose $y\in K$ nonadjacent to $c$, and let $p_j$ be the first neighbor of $y$ after $c$ toward $b$.  Such a vertex exists because $b$ is complete to $K$, and $j>i$.  By the choices of $c$ and $p_j$, the only neighbors of $y$ on $p_0\cdots p_j$ are $p_0$ and $p_j$.  Therefore
$y,p_0,p_1,\ldots,p_j,y$
is a hole of length $j+2\ge4$, since $j>i\ge1$.  The vertex $x$ is outside this hole and is adjacent to the three distinct rim vertices $y,p_0,c$.  Also, $x,y,p_0$ induce a triangle.  This is again a triangular wheel.

Thus, if $G$ has no triangular wheel, $G-K$ cannot be connected; then $K$ is a clique cutset.
\end{proof}

\begin{lemma}\label{lem:endpoint}
Let $G$ be diamond-free, and let $P=p_1p_2\cdots p_m$ be an induced path with $m\ge2$.  Suppose that no vertex outside $P$ has $p_1$ as its only neighbor on $P$.  Then $p_1$ is triangle-simple or $G$ contains a triangular wheel.
\end{lemma}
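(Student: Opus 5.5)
The plan is to argue by contradiction. Suppose $G$ has no triangular wheel and $p_1$ is not triangle-simple, and let $A\ne B$ be two inclusion-maximal cliques of order at least three containing $p_1$.

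\emph{Step 1: the two cliques are far apart.} Since $G$ is diamond-free, I first show $A\cap B=\{p_1\}$ and that there are no edges between $A\setminus\{p_1\}$ and $B\setminus\{p_1\}$.
\begin{itemize}[nosep]
\item If $a\in A\setminus\{p_1\}$ and $b\in B\setminus\{p_1\}$ are adjacent, take any $a'\in A\setminus\{p_1,a\}$.
\item Then $\{p_1,a,a',b\}$ spans a diamond unless $a'b\in E(G)$.
\item Hence $b$ is complete to $A$, contradicting the maximality of $A$.
\item The same argument shows the two cliques share only $p_1$.
\end{itemize}
Since $P$ is induced, $p_2$ is the only neighbor of $p_1$ on $P$. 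So $p_2$ lies in at most one of $A,B$, and no other vertex of $P$ lies in $A\cup B$. In particular we can pick $a\in A\setminus\{p_1,p_2\}$ and $b\in B\setminus\{p_1,p_2\}$ that are off $P$; by Step 1 they are nonadjacent. By hypothesis each of $a,b$, and indeed every vertex of $(A\cup B)\setminus\{p_1\}$ off $P$, has a neighbor in $P':=p_2\cdots p_m$.

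\emph{Step 2: build a hole through $p_1$.} The graph $G[V(P')\cup\{a,b\}]$ is connected, so let $Q$ be a shortest (hence induced) $a$--$b$ path in it. Its length is at least two, and the only possible neighbor of $p_1$ among its interior vertices is $p_2$.
\begin{itemize}[nosep]
\item If $p_2\notin V(Q)$, then $C=p_1aQbp_1$ is a hole.
\item If $p_2\in V(Q)$, I will instead choose $Q$ to minimise its use of $P'$. Alternatively, I pass to the subpath of $Q$ from $a$ to $p_2$ (and from $p_2$ to $b$), obtaining a shorter hole through $p_1,a$ or $p_1,b$.
\end{itemize}
I will try to arrange things so that we always obtain a hole $C$ containing the edge $p_1a$ with $a\in A$, together with a second vertex $a'\in A\setminus\{p_1,a\}$ off $C$. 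When $p_2\in A$, the vertex $p_2$ itself may play the role of a rim vertex, and we choose $a'$ to be a third vertex of $A$.

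\emph{Step 3: find the triangular wheel.} The vertex $a'$ is adjacent to both ends of the rim edge $p_1a$. So $(C,a')$ is a triangular wheel as soon as $a'$ has a third neighbor on $C$. By Step 1, $a'$ has no neighbor in $(B\setminus\{p_1\})\cap V(C)$. By hypothesis, $a'$ (if it is off $P$) has a neighbor on $P'$, but that neighbor need not lie on $C$.

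The main obstacle is exactly this third neighbor. My first attempt is to choose the data extremally. Among all choices of $a,a'\in A\setminus\{p_1\}$ and $b\in B\setminus\{p_1\}$, I take the index of the first $P'$-neighbor of $A\setminus\{p_1\}$ and route $Q$ from $a$ along $P'$ starting there, so that $Q$ is $a\,p_i\,p_{i+1}\cdots p_j\,b$. If $a'$ then has no neighbor on $Q$ beyond $a$, I swap the roles of $a$ and $a'$ (or of $A$ and $B$). I then compare the first $P'$-neighbors of $a$ and $a'$. If $a'$'s first neighbor on $P'$ lies between $p_i$ and $p_j$, it is on $C$. Otherwise I use the induced path through $a'$ to obtain a shorter hole on which $a$ plays the center, with neighbors $p_1,a'$ and $p_i$. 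In each case we get a hole containing an edge $p_1x$ with $x\in A$, together with a center in $A$ having a third rim neighbor, i.e. a triangular wheel.
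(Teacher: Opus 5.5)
\paragraph{There is a genuine gap in Steps 2--3.} Your Step~1 is fine; it is the standard fact that $G[N(p_1)]$ is a disjoint union of cliques. The gap sits exactly where you place ``the main obstacle'': Steps~2--3 never produce the third rim neighbor, and the sketch in your last paragraph rests on claims that do not hold as stated.
\begin{itemize}
\item The path $a\,p_i\,p_{i+1}\cdots p_j\,b$ need not exist as an induced path. The vertex $a$ may have further neighbors among $p_{i+1},\dots,p_j$, and $b$ may attach to $P'$ only before $p_i$.
\item The ``shorter hole on which $a$ plays the center'' is never specified. You do not say how it closes up at $p_1$.
\item The case $p_2\in A$ is not merely unhandled; your plan fails there. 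Take $A=\{p_1,p_2,a\}$ with $N_P(a)=\{p_1,p_2\}$, which satisfies the hypothesis. Then there is no third vertex of $A$ to serve as a center. Moreover, $a$ has no neighbor on a hole through $p_1p_2$ that returns via $b$, other than $p_1,p_2$, since $a$ is anticomplete to $B\setminus\{p_1\}$. So no center from $A$ exists at all.
\end{itemize}

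\paragraph{The missing idea.} Choose, from the start, a clique that avoids $p_2$. At least one of $A,B$ does, say $A$. By your Step~1 argument, $p_2$ is then anticomplete to $A\setminus\{p_1\}$. So every vertex of $A\setminus\{p_1\}$ is off $P$ and, by hypothesis, has a first neighbor on $p_3\cdots p_m$.

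Take $x,y\in A\setminus\{p_1\}$ with first such neighbors $p_i$ and $p_j$, labelled so that $i\le j$. Close the hole through $p_2$ along $P$ itself, not through the other clique:
\[
p_1\,y\,p_j\,p_{j-1}\cdots p_2\,p_1 .
\]
This is a hole of length $j+1\ge4$. It has no chords because $P$ is induced, $p_1$ misses $p_3,\dots,p_j$, and $y$ misses $p_2,\dots,p_{j-1}$. The vertex $x$ is adjacent to the three rim vertices $p_1$, $y$ and $p_i$, and $x\,p_1\,y$ is a triangle, so this is a triangular wheel.

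This is the paper's proof. The second clique serves only to guarantee that some clique avoids $p_2$. The vertex $b$ and the $a$--$b$ path $Q$ play no role. Your final subcase (``$a'$'s first neighbor lies beyond $p_j$'') is this argument in disguise, but it only works once $A$ is known to avoid $p_2$.
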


\begin{proof}
Since $G$ is diamond-free, $G[N(p_1)]$ is a disjoint union of cliques. Its components of order at least two correspond precisely to the maximal cliques of order at least three containing $p_1$.

Suppose that $p_1$ is not triangle-simple.  Then $G[N(p_1)]$ has at least two components of order at least two.  Choose adjacent vertices $x,y$ in one such component not containing $p_2$.  Thus $x$ and $y$ are nonadjacent to $p_2$, and they are outside $P$ because $N(p_1)\cap V(P)=\{p_2\}$.  By the hypothesis on the end of the path, each of $x,y$ has a neighbor in $\{p_3,\ldots,p_m\}$.  If $m=2$, this is already a contradiction; hence we may assume $m\ge3$.

Let $i=\min\{r\ge3:xp_r\in E(G)\}$ and $j=\min\{r\ge3:yp_r\in E(G)\}$, and interchange $x,y$ if necessary so that $i\le j$.  The sequence $p_1,y,p_j,p_{j-1},\ldots,p_2,p_1$
is a hole of length $j+1\ge4$.  The path has no chords, $y$ has no neighbor among $p_2,\ldots,p_{j-1}$, and $p_1$ has no neighbor among $p_3,\ldots,p_j$.  The vertex $x$ is adjacent to the three distinct rim vertices $p_1,y,p_i$ because $i\le j$ places $p_i$ on the rim.  Since $x,p_1,y$ induce a triangle, this wheel is triangular.
\end{proof}

\begin{proof}[Proof of Theorem~\ref{thm:two}]
We induct on $|V(G)|$.
%We proceed by induction on $|V(G)|$.  There is nothing to prove for complete graphs, so let $G$ be noncomplete.

If $G$ is disconnected, choose a vertex in each of two distinct components.  In a complete component any vertex will do; in a noncomplete component choose a triangle-simple vertex by induction.  The chosen vertices are triangle-simple in $G$ and are nonadjacent.

Assume that $G$ is connected.  If $G$ has a clique cutset $K$, choose two distinct components $D_1,D_2$ of $G-K$, and put $G_i=G[K\cup D_i]$.  Each $G_i$ is a proper induced subgraph in $\calT$.  If $G_i$ is complete, choose any $v_i\in D_i$.  Otherwise, induction gives two nonadjacent triangle-simple vertices in $G_i$; since $K$ is a clique, at least one of them is in $D_i$, and we take this vertex to be $v_i$. 
Since $N_G(v_i)\subseteq K\cup D_i$, the maximal cliques containing $v_i$ are the same in $G_i$ and $G$.
 Thus $v_i$ is triangle-simple in $G$, and $v_1, v_2$ are nonadjacent.
%No vertex of $D_i$ has a neighbor outside $K\cup D_i$, so $v_i$ is triangle-simple in $G$ as well.  Indeed, every maximal clique containing $v_i$ is the same in $G_i$ and $G$.  The vertices $v_1,v_2$ are nonadjacent.

It remains to consider a connected noncomplete graph with no clique cutset.  By Lemma~\ref{lem:diamond}, it is diamond-free, because $G\in\calT$.  Choose an induced $P_3$ and extend it at either end as long as the resulting path is induced. Let $P=p_1p_2\cdots p_m$ be the path when no further extension is possible.  Both ends satisfy the hypothesis of Lemma~\ref{lem:endpoint}, since a vertex outside $P$ adjacent only to an end of $P$ could extend it.  Since $G$ has no triangular wheel, the lemma shows that $p_1$ and $p_m$ are triangle-simple.  The path has at least three vertices, so its ends are nonadjacent.
\end{proof}

\begin{corollary}\label{cor:boundary}
Let $G\in\calT$ and let $A\subsetneq V(G)$ be a clique.  Then $V(G)\setminus A$ contains a triangle-simple vertex.
\end{corollary}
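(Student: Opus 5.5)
The plan is a case split on whether $G$ is complete, with Theorem~\ref{thm:two} doing all the work in the noncomplete case.

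\emph{Complete case.} If $G$ is complete, then every vertex lies in exactly one maximal clique, namely $V(G)$ itself. So every vertex is triangle-simple. Since $A\subsetneq V(G)$, the set $V(G)\setminus A$ is nonempty, and any vertex of it will do.

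\emph{Noncomplete case.} If $G$ is not complete, Theorem~\ref{thm:two} gives two nonadjacent triangle-simple vertices $u,v$. Since $A$ is a clique, it cannot contain both of them. Hence at least one of $u,v$ lies in $V(G)\setminus A$, and we take that vertex.

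No step here should be an obstacle. The only points to check are that triangle-simplicity is evaluated in $G$ itself, with no induced subgraph involved, and that the hypothesis $A\neq V(G)$ guarantees $V(G)\setminus A\ne\varnothing$ in the complete case. That hypothesis is exactly what the complete case needs.
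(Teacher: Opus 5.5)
Your proposal is correct and matches the paper's proof: the complete case (every vertex is triangle-simple, and $A\subsetneq V(G)$ leaves a vertex outside $A$), and the noncomplete case via Theorem~\ref{thm:two}, where a clique cannot contain both nonadjacent triangle-simple vertices. You also make explicit the nonemptiness of $V(G)\setminus A$ in the complete case, which the paper leaves implicit.
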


\begin{proof}
If $G$ is complete, every vertex is triangle-simple.  Otherwise, Theorem~\ref{thm:two} gives two nonadjacent triangle-simple vertices, and a clique cannot contain both.
\end{proof}

\begin{corollary}\label{cor:order}
Let $G\in\calT$ and let $A$ be a clique.  There is an ordering $v_1,\ldots,v_m$ of $V(G)\setminus A$ such that $v_i$ is triangle-simple in $ G_i=G[A\cup\{v_i,v_{i+1},\ldots,v_m\}]$
for every $i$.
\end{corollary}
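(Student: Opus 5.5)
We build the ordering greedily, one vertex at a time, using Corollary~\ref{cor:boundary} together with the fact that $\calT$ is hereditary. The formal version is an induction on $m=|V(G)\setminus A|$. If $m=0$ the empty ordering satisfies the statement vacuously.

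Now suppose $m\ge1$, so that $A\subsetneq V(G)$. Corollary~\ref{cor:boundary} applies to $G$ and the clique $A$, and gives a vertex $v_1\in V(G)\setminus A$ that is triangle-simple in $G$. This $G$ is exactly $G_1=G[A\cup (V(G)\setminus A)]$, so the condition for $i=1$ holds.

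Next consider $G'=G-v_1$. It lies in $\calT$ because $\calT$ is hereditary. The set $A$ is still a clique in $G'$, and $|V(G')\setminus A|=m-1$. By induction there is an ordering $v_2,\ldots,v_m$ of $V(G')\setminus A$ such that each $v_i$ is triangle-simple in $G'[A\cup\{v_i,\ldots,v_m\}]$. For $i\ge2$ this graph coincides with $G[A\cup\{v_i,\ldots,v_m\}]=G_i$, because both are induced subgraphs on the same vertex set. Hence the ordering $v_1,v_2,\ldots,v_m$ works for $G$.

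There is no real obstacle. The only points to check are that triangle-simplicity is measured in the induced subgraph $G_i$, which the induction handles automatically, and that Corollary~\ref{cor:boundary} requires $A$ to be a proper subset of the current vertex set. That requirement holds at every step with $m\ge1$.
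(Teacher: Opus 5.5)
Your proof is correct and follows the paper's approach: the paper's one-line proof applies Corollary~\ref{cor:boundary} successively while retaining $A$. Your induction on $|V(G)\setminus A|$ formalizes exactly that, using that $\calT$ is hereditary and that $G'[A\cup\{v_i,\ldots,v_m\}]=G_i$ for $i\ge2$.
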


\begin{proof}
Apply Corollary~\ref{cor:boundary} successively, retaining $A$ at every step.
\end{proof}

\section{List colorings}\label{sec:lists}

For $v\in V(G)$, let $\om_G(v)=\max\{|Q|: Q\text{ is a clique of }G\text{ containing }v\}$.
Assign each color $c$ a positive integer \emph{clique capacity} $s(c)$, requiring that the subgraph induced by the vertices of color $c$ be $K_{s(c)+1}$-free. A list precoloring $\varphi_A$ of a set $A$ assigns each $a\in A$ a color $\varphi_A(a)\in L(a)$. A list coloring or precoloring is \emph{legal} if it respects these clique capacities.

\begin{theorem}\label{thm:local-list}
Let $G\in\calT$, let $A$ be a clique of $G$, and assign an integer capacity $s(c)\ge2$ to every color occurring in the finite lists $L(v)$.  Suppose that $A$ is list-precolored and each color $c$ occurs at most $s(c)$ times on $A$.  If
\begin{equation}\label{eq:local-capacity}
   \sum_{c\in L(v)}s(c)\ge \om_G(v)
   \qquad\text{for every }v\in V(G)\setminus A,
\end{equation}
then the precoloring extends to a list coloring of $G$ in which the color-$c$ class has clique number at most $s(c)$ for every color $c$.
\end{theorem}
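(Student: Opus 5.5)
We would use the deletion order of Corollary~\ref{cor:order} applied to $G$ and the precolored clique $A$. This gives an ordering $v_1,\ldots,v_m$ of $V(G)\setminus A$ such that $v_i$ is triangle-simple in $G_i=G[A\cup\{v_i,\ldots,v_m\}]$. We color in reverse order, $v_m,v_{m-1},\ldots,v_1$, maintaining the invariant that the current coloring of $G_{i+1}$ is legal. The base case is $G_{m+1}=G[A]$, which is legal because $A$ is a clique and each color $c$ occurs at most $s(c)$ times on it. It then suffices to show that $v_i$ always has a color $c\in L(v_i)$ that creates no monochromatic $K_{s(c)+1}$ in $G_i$. Since $G_i$ is an induced subgraph of $G$, we have $\om_{G_i}(v_i)\le\om_G(v_i)$, so (\ref{eq:local-capacity}) continues to hold in $G_i$.

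Suppose every $c\in L(v_i)$ is blocked. Then for each such $c$ there is a clique $Q_c\subseteq N_{G_i}(v_i)$ of order $s(c)$, all of whose vertices have color $c$. Since $s(c)\ge2$, each $Q_c\cup\{v_i\}$ is a clique of order at least three. By triangle-simplicity, all these cliques lie in the unique maximal clique $M$ of $G_i$ of order at least three that contains $v_i$. Cliques of distinct colors are vertex-disjoint, so
\[
|M|\ \ge\ 1+\sum_{c\in L(v_i)}s(c)\ \ge\ 1+\om_G(v_i).
\]
But $M$ is a clique of $G$ containing $v_i$, so $|M|\le\om_G(v_i)$, a contradiction. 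Therefore some color in $L(v_i)$ is available, and the induction proceeds.

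Two details need checking. First, the blocking clique must avoid $v_i$ and consist only of already-colored vertices. This holds because a new monochromatic $K_{s(c)+1}$ in $G_i$ must contain $v_i$, as $G_{i+1}$ was legal, so removing $v_i$ leaves a monochromatic $Q_c$ of order $s(c)$ in $N(v_i)$. Second, the hypothesis $s(c)\ge2$ is used essentially. It guarantees that each blocking clique yields a triangle through $v_i$, which is what lets triangle-simplicity place all blocking cliques in one maximal clique. The only genuinely structural input is Corollary~\ref{cor:order}, resting on Theorem~\ref{thm:two}.

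If $L(v_i)$ is empty, then (\ref{eq:local-capacity}) would give $0\ge\om_G(v_i)\ge1$, which is impossible, so the lists are nonempty and the argument covers every vertex.
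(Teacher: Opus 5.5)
Your proof is correct and is essentially the paper's argument. The paper inducts on $|V(G)\setminus A|$ by deleting one triangle-simple vertex $v\notin A$ from Corollary~\ref{cor:boundary} and then coloring $v$; unrolled, this is exactly your reverse-order coloring along the sequence of Corollary~\ref{cor:order} (the form the paper itself uses for Theorem~\ref{thm:residual}), with the same count of disjoint blocking cliques inside one maximal clique.
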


\begin{proof}
We proceed by induction on $|V(G)\setminus A|$.  If $A=V(G)$, the precoloring already has the required property.  Otherwise, Corollary~\ref{cor:boundary} gives a triangle-simple vertex $v\notin A$.  Since $\om_G(v)\ge1$, condition~\eqref{eq:local-capacity} also ensures that $L(v)$ is nonempty.  For every remaining uncolored vertex $u$, we have $\om_{G-v}(u)\le \om_G(u)$, so induction applies to $G-v$ with the same precolored clique, lists and capacities.

After coloring $G-v$, call a color $c\in L(v)$ \emph{blocked} if assigning $c$ to $v$ creates a monochromatic clique of order $s(c)+1$.  Since the coloring of $G-v$ is legal, a blocked color $c$ yields a color-$c$ clique $S_c\subseteq N(v)$ with $|S_c|=s(c)$.
Since $s(c)\ge 2$,  the clique $S_c\cup\{v\}$ is contained in a maximal clique of order at least three.
%Then $S_c\cup\{v\}$ is a clique of order at least three because $s(c)\ge2$ and extends to a maximal clique of order at least three containing $v$.

Suppose every color in $L(v)$ is blocked.  Since the vertex is triangle-simple, all sets $S_c$ are contained in the same maximal clique $M$ through $v$.  The sets $S_c$ are pairwise disjoint for distinct colors.  Hence
\[
\begin{aligned}
   \om_G(v)
   &\le \sum_{c\in L(v)}s(c)
    = \sum_{c\in L(v)}|S_c| \\
   &\le |M|-1
    \le \om_G(v)-1,
\end{aligned}
\]
a contradiction.  Thus we can assign an unblocked color to $v$.
\end{proof}

\Needspace{8\baselineskip}
The following refinement takes account of the precolored vertices in the maximal cliques arising along the deletion order.

\begin{theorem}\label{thm:residual}
Let $G\in\calT$, let $A$ be a clique, and let $v_1,\ldots,v_m$ and $G_i$ be as in Corollary~\ref{cor:order}.  Assign an integer capacity $s(c)\ge2$ to every color in the finite lists $L(v)$.  Suppose that $A$ is list-precolored by $\varphi_A$, with each color $c$ occurring at most $s(c)$ times on $A$, and that $L(v_i)$ is nonempty for every $i$.  If $v_i$ belongs to a maximal clique of order at least three in $G_i$, let $M_i$ be the unique such maximal clique.  For every $i$ for which $M_i$ exists and every color $c$, put $a_i(c)=|\{x\in A\cap M_i:\varphi_A(x)=c\}|$. Then $\varphi_A$ extends to a legal list coloring of $G$ provided that
\begin{equation}\label{eq:residual-capacity}
   \sum_{c\in L(v_i)}\bigl(s(c)-a_i(c)\bigr)
   \ge |M_i\setminus A|
\end{equation}
for every $i$ for which $M_i$ exists.  No capacity-sum condition is required when $M_i$ does not exist.
\end{theorem}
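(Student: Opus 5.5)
We extend $\varphi_A$ along the fixed order, coloring $v_m,v_{m-1},\ldots,v_1$ in this sequence, so that when $v_i$ is colored the colored vertices are exactly $A\cup\{v_{i+1},\ldots,v_m\}$. We maintain the invariant that the current coloring of $G_{i+1}$ is legal, i.e.\ every color class $c$ induces a graph with clique number at most $s(c)$. Initially, on $G_{m+1}=G[A]$, this holds because each color occurs at most $s(c)$ times on the clique $A$. At the end we obtain a legal list coloring of $G_1=G$ extending $\varphi_A$.

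To color $v_i$, note that its neighbors in $G_i$ are all already colored, since $N_{G_i}(v_i)\subseteq A\cup\{v_{i+1},\ldots,v_m\}$. As in the proof of Theorem~\ref{thm:local-list}, call $c\in L(v_i)$ \emph{blocked} if giving $v_i$ color $c$ creates a monochromatic $K_{s(c)+1}$ in $G_i$. Because the coloring of $G_{i+1}$ is legal, any new monochromatic clique must contain $v_i$. Thus $c$ is blocked exactly when there is a color-$c$ clique $S_c\subseteq N_{G_i}(v_i)$ with $|S_c|=s(c)$.

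If $M_i$ does not exist, then $v_i$ lies in no maximal clique of order at least three in $G_i$. Since $s(c)\ge2$, the set $S_c\cup\{v_i\}$ would be a clique of order at least three in $G_i$, so no color is blocked. As $L(v_i)\neq\varnothing$, any color works, which is why no capacity-sum condition is needed in this case.

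If $M_i$ exists, suppose every $c\in L(v_i)$ is blocked. Each $S_c\cup\{v_i\}$ is a clique of order at least three in $G_i$, so it lies in a maximal clique of order at least three through $v_i$. By triangle-simplicity of $v_i$ in $G_i$, that maximal clique is $M_i$. Hence the sets $S_c$ are pairwise disjoint subsets of $M_i\setminus\{v_i\}$, one for each color in $L(v_i)$.

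Now split each $S_c$ into its part in $A$ and its part outside $A$. The part in $A$ consists of color-$c$ vertices of $A\cap M_i$, so it has size at most $a_i(c)$. The part outside $A$ therefore has size at least $s(c)-a_i(c)$. These outside parts are disjoint subsets of $(M_i\setminus A)\setminus\{v_i\}$, and $v_i\in M_i\setminus A$. This gives
\[
\sum_{c\in L(v_i)}\bigl(s(c)-a_i(c)\bigr)\le |M_i\setminus A|-1,
\]
which contradicts \eqref{eq:residual-capacity}. So some color in $L(v_i)$ is unblocked. Assigning it to $v_i$ keeps the coloring of $G_i$ legal, since no monochromatic $K_{s(c)+1}$ through $v_i$ is created.

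The main point needing care is this accounting step. First, the bound $|S_c\cap A|\le a_i(c)$ requires that $S_c\cap A$ consist of color-$c$ vertices of $A\cap M_i$. This uses that vertices of $A$ keep their precolors $\varphi_A$ and that $S_c\subseteq M_i$. Second, the maximal clique must be taken in $G_i$, not in $G$. This is legitimate because blocking only involves the colored vertices of $G_i$ adjacent to $v_i$, and $M_i$ is defined as a maximal clique of $G_i$.

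Note that $s(c)-a_i(c)\ge0$ for every $c$, because each color appears at most $s(c)$ times on $A$. The order is given by Corollary~\ref{cor:order}, so the induction runs along it and does not re-choose vertices at each step.
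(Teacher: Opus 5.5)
Your proof is correct and follows essentially the same argument as the paper. You color greedily in the reverse order $v_m,\ldots,v_1$ and use triangle-simplicity of $v_i$ in $G_i$ to put every blocking clique $S_c$ inside $M_i$; then $|S_c\cap A|\le a_i(c)$ and the disjoint sets $S_c\setminus A$ fit in $M_i\setminus(A\cup\{v_i\})$, which contradicts the residual capacity inequality. Your explicit invariant that the coloring of $G_{i+1}$ is legal, and your remark that blocking is measured in $G_i$, spell out what the paper states in a single line.
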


\begin{proof}
Color the vertices in the reverse order $v_m,v_{m-1},\ldots,v_1$.  Immediately before coloring $v_i$, the colored vertices are precisely $V(G_i)\setminus\{v_i\}$, and their coloring is legal.  If $M_i$ does not exist, no color of capacity at least two can be blocked at $v_i$, and we can choose a color from its nonempty list.  Suppose now that $M_i$ exists and all colors in $L(v_i)$ are blocked.  For each such color choose a color-$c$ clique $S_c$ of order $s(c)$ in $N_{G_i}(v_i)$.  Since $v_i$ is triangle-simple in $G_i$, every $S_c$ is contained in $M_i$.  At most $a_i(c)$ vertices of $S_c$ are in $A$, so at least $s(c)-a_i(c)$ are in $M_i\setminus(A\cup\{v_i\})$.  The sets $S_c\setminus A$ are disjoint for distinct colors, so $\sum_{c\in L(v_i)}\bigl(s(c)-a_i(c)\bigr)\le |M_i\setminus A|-1$, contrary to \eqref{eq:residual-capacity}.  Thus an unblocked color is available at every step.
\end{proof}

The sum $\sum_{c\in L(v_i)}a_i(c)$ counts exactly the vertices of $A\cap M_i$ whose preassigned colors belong to $L(v_i)$.  We may therefore rewrite \eqref{eq:residual-capacity} as
\[
   \sum_{c\in L(v_i)}s(c)
   +|\{x\in A\cap M_i:\varphi_A(x)\notin L(v_i)\}|
   \ge |M_i|.
\]
Condition~\eqref{eq:local-capacity} implies \eqref{eq:residual-capacity}, since $|M_i|\le\om_G(v_i)$. The converse need not hold: precolor two vertices of $K_3$ with a color $d$ of capacity two and give the third vertex only a different color $c$ of capacity two. Then \eqref{eq:residual-capacity} holds, whereas \eqref{eq:local-capacity} does not.

\begin{proof}[Proof of Theorem~\ref{cor:exact}]
Put $q=\ceil{\om(G)/(t-1)}$ and give every color capacity $t-1$.  For lists of size at least $q$, we have $\sum_{c\in L(v)}s(c)\ge q(t-1)\ge\om(G)\ge\om_G(v)$. Apply Theorem~\ref{thm:local-list} with $A=\varnothing$ to obtain the upper bound for lists.  Also, $\chr_t(G)\le\chr_t^\ell(G)$ by assigning the same list of colors to every vertex.  On a maximum clique, each color can occur at most $t-1$ times, so every $K_t$-free coloring uses at least $q$ colors.  
%The graph with no vertices satisfies the formula with both sides zero.
\end{proof}

\section{Pollyanna classes}\label{sec:pollyanna}

For an integer $k\ge1$, define $\chr^{(k)}(G)=\max\{\chr(G[X]):X\subseteq V(G),\ \om(G[X])\le k\}$.

Thus a hereditary class $\mathcal F$ is $k$-good if and only if $\chr^{(k)}(G)$ is bounded by a constant over all $G\in\mathcal F$.

The $k$-good condition by itself does not imply $\chr$-boundedness. For each fixed $k\ge1$, Gir\~ao et al.~\cite{GIP24}, with $F=K_{k+1}$, give a family of graphs with clique number $k+1$ and unbounded chromatic number, while every $K_{k+1}$-free induced subgraph of a graph in the family has chromatic number bounded by a constant depending only on $k$. The class of all induced subgraphs of graphs in this family is therefore $k$-good and has unbounded chromatic number at clique number $k+1$.

Chudnovsky, Cook, Davies and Oum~\cite{CCDO} observed  that a polynomial bound on $\chr_t$ in terms of $\omega$ implies the
$(t-1)$-strongly Pollyanna property. Theorem~\ref{cor:exact} gives the following explicit bound for $\calT$.
%Chudnovsky, Cook, Davies and Oum~\cite{CCDO} observed that a polynomial bound on $\chr_t$ yields the $(t-1)$-strongly Pollyanna property.  We apply this observation to Theorem~\ref{cor:exact}.  The exact bound gives a linear $\chr$-bounding function on each of the intersections in the definition.

\begin{corollary}\label{cor:interpolate}
For every $G\in\calT$ and every $k\ge2$,
\[
   \chr(G)\le
   \chr^{(k)}(G)\ceil{\frac{\om(G)}k}.
\]
\end{corollary}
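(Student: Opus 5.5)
The plan is to apply Theorem~\ref{cor:exact} with $t=k+1$ and then refine each color class properly.

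Fix $G\in\calT$ and $k\ge2$, and put $t=k+1\ge3$. Theorem~\ref{cor:exact} gives a $K_{k+1}$-free coloring of $G$ with $q=\ceil{\om(G)/k}$ colors. We may equally invoke Theorem~\ref{thm:local-list} with $A=\varnothing$, every capacity equal to $k$, and the common list $\{1,\dots,q\}$; condition~\eqref{eq:local-capacity} holds because $qk\ge\om(G)\ge\om_G(v)$. Let $X_1,\dots,X_q$ be the color classes. Each $G[X_j]$ is $K_{k+1}$-free, so $\om(G[X_j])\le k$. By the definition of $\chr^{(k)}$, this gives $\chr(G[X_j])\le\chr^{(k)}(G)$.

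Next, properly color each $G[X_j]$ with at most $\chr^{(k)}(G)$ colors. Use a palette for class $j$ that is disjoint from the palettes of the other classes, for instance pairs $(j,i)$ with $1\le i\le\chr^{(k)}(G)$. The combined coloring is proper on $G$. An edge inside some $X_j$ is handled by the proper coloring of $G[X_j]$. An edge between distinct classes receives colors from disjoint palettes. Hence $\chr(G)\le q\cdot\chr^{(k)}(G)$, which is the stated bound.

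The only points needing care are degenerate cases. If $V(G)=\varnothing$, both sides are zero. Empty color classes contribute nothing. There is no real obstacle here: all of the difficulty is contained in Theorem~\ref{cor:exact}, and this corollary is the standard product argument.
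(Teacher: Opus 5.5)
Your proposal is correct and is essentially the paper's own proof: apply Theorem~\ref{cor:exact} with $t=k+1$ to split $V(G)$ into $\ceil{\om(G)/k}$ classes of clique number at most $k$, then properly color each class with at most $\chr^{(k)}(G)$ colors from pairwise disjoint palettes. Your remark that Theorem~\ref{thm:local-list} with uniform capacity $k$ and the common list $\{1,\dots,q\}$ yields the same partition, and your handling of the empty graph, are both fine.
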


\begin{proof}
By Theorem~\ref{cor:exact} with $t=k+1$, partition $V(G)$ into $\ceil{\om(G)/k}$ sets, each inducing a graph of clique number at most $k$.  By the definition of $\chr^{(k)}(G)$, each part needs at most $\chr^{(k)}(G)$ colors.  Color these induced subgraphs properly, using disjoint sets of colors.
\end{proof}

\begin{proof}[Proof of Theorem~\ref{thm:strong}]
Fix $k\ge 2$, and let $\mathcal F$ be $k$-good, with constant $m$, and let $G\in\calT\cap\mathcal F$. By Corollary~\ref{cor:interpolate}, $\chr(G)\le m\ceil{\om(G)/k}\le m\om(G)$. Since  $\calT\cap \mathcal F$ is hereditary, it is linearly $\chr$-bounded.

Now let $\mathcal F$ be a $\chr$-bounded class with a nondecreasing binding function $f$,
and let $H$ be an induced subgraph of a member of $\calT\cap\mathcal F$.
By Theorem~\ref{cor:exact}, $H$ can be partitioned into 
$\ceil{\om(H)/2}$ triangle-free induced subgraphs,
each of chromatic number at most $f(2)$. 
Hence $\chr(H)\le f(2)\ceil{\om(H)/2}\le f(2)\om(H)$.
Thus $f(2)x$  is a linear binding function for $\cal T\cap \mathcal F$.
 Both conclusions hold for the wheel-free class, which is
contained in $\cal T$.
%To prove the Pollyanna assertion, let $\mathcal F$ be any $\chr$-bounded class with a nondecreasing binding function $f$.  If $H$ is an induced subgraph of a member of $\calT\cap\mathcal F$, Theorem~\ref{cor:exact} partitions $H$ into $\ceil{\om(H)/2}$ triangle-free induced subgraphs.  Each is an induced subgraph of a member of $\mathcal F$ and therefore has chromatic number at most $f(2)$.  Thus $\chr(H)\le f(2)\ceil{\om(H)/2}\le f(2)\om(H)$ when $V(H)\ne\varnothing$.  Hence $p_{\mathcal F}(x)=f(2)x$ is a polynomial binding function.  The bound also holds when $V(H)=\varnothing$. Since every wheel-free graph belongs to $\calT$, both conclusions hold for the class of wheel-free graphs as well.
\end{proof}

\section{Algorithms and examples}\label{sec:algorithms}

For $v\in V(G)$, let $
   X_v=\{x\in N(v): x\text{ is incident with an edge of }G[N(v)]\}.$

\begin{proposition}\label{prop:test}
A vertex $v$ is triangle-simple if and only if $X_v$ is a clique.
\end{proposition}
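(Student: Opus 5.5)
The plan is to prove both directions by relating maximal cliques of order at least three through $v$ to the edges of $G[N(v)]$. A set $Q\subseteq N(v)$ with $|Q|\ge2$ gives a clique $Q\cup\{v\}$ of order at least three exactly when $Q$ is a clique in $G[N(v)]$. Consequently, the maximal cliques of order at least three containing $v$ are exactly the sets $Q\cup\{v\}$ with $Q$ a maximal clique of $G[N(v)]$ of order at least two. Every vertex of such a $Q$ lies in $X_v$, because it is incident with an edge of $Q$. Conversely, every vertex $x\in X_v$ lies in some such $Q$: take an edge $xy$ of $G[N(v)]$ and extend $\{x,y\}$ to a maximal clique of $G[N(v)]$. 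Hence $X_v$ is precisely the union of the maximal cliques of $G[N(v)]$ of order at least two.

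For the ``if'' direction, suppose $X_v$ is a clique. If $X_v=\varnothing$, then $v$ lies in no clique of order at least three and is trivially triangle-simple. Otherwise, let $Q$ be any maximal clique of $G[N(v)]$ of order at least two. Then $Q\subseteq X_v$, and $X_v$ is a clique inside $N(v)$, so maximality forces $Q=X_v$. Thus the only maximal clique of order at least three through $v$ is $X_v\cup\{v\}$.

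For the ``only if'' direction, suppose $v$ is triangle-simple. If no maximal clique of order at least three contains $v$, then $G[N(v)]$ has no edges, so $X_v=\varnothing$, which is a clique. Otherwise there is a unique such clique $M$. By the correspondence above, every $x\in X_v$ lies in some $Q\cup\{v\}$, which must equal $M$, so $X_v\subseteq M\setminus\{v\}$. Being a subset of a clique, $X_v$ is itself a clique.

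No step is a genuine obstacle. The only points needing care are the correspondence between maximal cliques through $v$ and maximal cliques of $G[N(v)]$, and the degenerate case $X_v=\varnothing$.
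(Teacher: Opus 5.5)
Your proof is correct and takes essentially the same route as the paper: both directions use the facts that every edge of $G[N(v)]$ together with $v$ lies in a maximal clique of order at least three, and that when $X_v$ is a clique every such maximal clique must be $X_v\cup\{v\}$. Your explicit correspondence with the maximal cliques of $G[N(v)]$ of order at least two is a more formal packaging of the paper's direct argument with cliques through $v$.
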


\begin{proof}
If $X_v=\varnothing$, then $v$ belongs to no triangle, and both assertions hold.  Otherwise $|X_v|\ge2$.  Every edge of $G[N(v)]$ together with $v$ is contained in a maximal clique of order at least three.  If there is at most one such maximal clique, both ends of every such edge belong to it, so $X_v$ is a clique.  Conversely, if $X_v$ is a clique, then every clique of order at least three through $v$ is contained in $X_v\cup\{v\}$, which is therefore the unique maximal clique of order at least three through $v$.
\end{proof}

Assume that the input graph belongs to $\calT$, and let $A$ be the clique to be retained.  An order as in Corollary~\ref{cor:order} is obtained by repeatedly scanning the remaining vertices outside $A$ and applying Proposition~\ref{prop:test}.  With an adjacency matrix, 
testing a candidate vertex takes $O(n^2)$ time, where $n=|V(G)|$.
Since there are at most $n$ tests in each of at most $n$ deletion rounds, the order can be found in $O(n^4)$ time.
The proof of Theorem~\ref{thm:local-list} then gives a greedy coloring algorithm by processing the vertices in reverse order.  If $v_i$ belongs to a unique maximal clique $M_i$ of order at least three in $G_i$, a color $c$ is blocked precisely when at least $s(c)$ vertices of $M_i\setminus\{v_i\}$ already have color $c$.  If no such $M_i$ exists, no color is blocked. 
Each blocking test counts at most $n$ vertices and compares the count with $s(c)$.
Thus the coloring phase runs
in polynomial time in the full input length, including the lists, color labels, and binary encodings of the
capacities. These algorithms do not recognize membership in 
$\cal T$.
%The coloring algorithm runs in time polynomial in the full input length, including the lists, color labels and binary encodings of the capacities.  The blocking test uses only color counts bounded by $n$ and integer comparisons.  The bound $O(n^4)$ refers only to finding the order.  These algorithms do not recognize membership in $\calT$.

Diot, Tavenas and Trotignon~\cite{DTT14} proved that detecting an induced wheel is NP-complete.  Their result does not determine the complexity of recognizing $\calT$, which forbids only triangular wheels.

The same order gives a maximum clique: choose a clique of maximum order among $A$,  all edges and singletons,
and the cliques $X_{v_i}\cup \{v_i\}$ for steps with
$|X_{v_i}|\ge 2$,  where $X_{v_i}$ is computed in $G_i$.
%The same order gives a maximum clique.  At the deletion of $v_i$, compute $X_{v_i}$ in the remaining graph $G_i$.  Compare the retained clique $A$, all edges, all singletons, and the clique $X_{v_i}\cup\{v_i\}$ at each step for which $|X_{v_i}|\ge2$.  
Every clique not contained in $A$ has a first vertex in the deletion order.  At that step all other vertices of the clique remain in $G_i$.  If the clique has order at least three, it is contained in the unique maximal clique of order at least three through that vertex in the remaining graph.  When $V(G)=\varnothing$, take the empty clique.  Aboulker, Charbit, Trotignon and Vu\v{s}kovi\'c~\cite{ACTV} showed that maximum weighted cliques can already be found in polynomial time in the larger class of universal-wheel-free graphs; the construction here follows directly from our elimination order.

The following examples show why the hypotheses are needed.
\begin{itemize}[leftmargin=2em]
\item The threshold in \eqref{eq:local-capacity} cannot be lowered by one in general.  Given a finite nonempty set of colors $C$ with capacities $s(c)\ge2$ and $S=\sum_{c\in C}s(c)$, take $G=K_{S+1}$, precolor $S$ vertices so that color $c$ occurs exactly $s(c)$ times, and give the remaining vertex the list $C$.  Its list capacity sum is $S=\om_G(v)-1$, so no listed color is legal for the last vertex.
\item The hypothesis that $A$ is a clique cannot be omitted. In the bowtie formed by two triangles sharing one vertex, precolor the two nonshared vertices of one triangle red and the two nonshared vertices of the other blue, with both capacities equal to two, and give the shared vertex the list $\{\mathrm{red},\mathrm{blue}\}$.  The precoloring is legal and the shared vertex satisfies the local capacity inequality $4\ge3=\om_G(v)$.  The shared vertex cannot be colored red or blue.
\item The restriction $s(c)\ge2$ is necessary.  With two capacity-one colors on every list, $C_5$ would require a proper two-coloring.  Here the capacity sum equals $\om(C_5)=2$, but $C_5$ is not bipartite.
\item Some exclusion of triangular wheels is necessary for the structural theorem.  The universal wheel obtained from $C_4$ by adding a vertex complete to the rim has no triangle-simple vertex.  The center belongs to four maximal triangles, and each rim vertex belongs to two.
\end{itemize}

\section*{Declaration of competing interest}
The authors declare that they have no known competing financial interests
or personal relationships that could have appeared to influence the
work reported in this paper.

\section*{Acknowledgements}
The research was partially supported by RGC Competitive Earmarked
Research Grant 16308821.

\section*{Data availability}
No data was used for the research described in the article.

\section*{Declaration of generative AI use}
ChatGPT assisted in the clique-capacity counting arguments in Theorems~\ref{thm:local-list} and~\ref{thm:residual}. It also assisted with language editing. The authors take full responsibility for all results, arguments and proofs in this paper.

\end{document}